\newtheorem{theorem}{Theorem}[section]
\theoremstyle{definition}
\theoremstyle{remark}
\numberwithin{equation}{section}
\newcommand{ \prost }{ \operatorname{p} }
\newcommand{ \pprost }{ \operatorname{\tilde{p}} }
\newcommand{ \bs }[1]{ \boldsymbol{#1} }
\begin{document}

\title[Lattice Packings of Cross-polytopes]
      {Lattice Packings of Cross-polytopes from {R}eed--{S}olomon Codes and {S}idon Sets}

\author{Mladen Kova\v{c}evi\'c}

\thanks{The author is with the Faculty of Technical Sciences,
University of Novi Sad, 21000 Novi Sad, Serbia
(email: kmladen@uns.ac.rs; orcid: \href{https://orcid.org/0000-0002-2395-7628}{0000-0002-2395-7628})}
\thanks{This work was supported by the European Union's Horizon 2020
research and innovation programme under Grant Agreement number 856967,
and by the Secretariat for Higher Education and Scientific Research of
the Autonomous Province of Vojvodina through the project number 142-451-2686/2021.}

\subjclass[2020]{Primary: 11H31, 52C17, 05B40. Secondary: 11B83, 11H71, 11T71.} %, 94B25.}

\date{April 23, 2022.}

\keywords{Lattice packing, cross-polytope, superball, Lee metric,
Manhattan metric, Reed--Solomon code, Sidon set, $ B_h $ sequence.}

\begin{abstract}
Two constructions of lattice packings of $ n $-dimensional cross-polytopes
($ \ell_1 $ balls) are described, the density of which exceeds that of any
prior construction by a factor of at least $ 2^{\frac{n}{\ln n}(1 + o(1))} $
when $ n \to \infty $.
The first family of lattices is explicit and is obtained by applying
Construction A to a class of Reed--Solomon codes.
The second family has subexponential construction complexity and is based
on the notion of Sidon sets in finite Abelian groups.
The construction based on Sidon sets also gives the highest known
asymptotic density of packing discrete cross-polytopes of fixed radius
$ r \geqslant 3 $ in $ \mathbb{Z}^n $.
\end{abstract}

\maketitle

\section{Introduction}
\label{sec:intro}

Dense packings of spheres and other bodies in Euclidean spaces have been
objects of mathematical research for centuries \cite{conway+sloane, gruber+lek, rogers},
and have also found applications in various fields such as coding theory
and physics.
In this paper we consider the problem of efficiently packing cross-polytopes
and give two simple constructions of \emph{lattice} packings in $ \mathbb{R}^n $,
for arbitrary $ n $, of density larger than that of any prior construction.

An $ n $-dimensional cross-polytope $ C_n $ is a unit ball in $ \mathbb{R}^n $
with respect to the $ \ell_1 $ metric,
$ C_n = \left\{ \bs{y} \in \mathbb{R}^n : \sum_{i=1}^n |y_i| \leqslant 1 \right\} $.
A cross-polytope of radius $ r \in \mathbb{R} $ is the body
$ r C_n = \{ r \bs{y} : \bs{y} \in C_n \} $ of volume $ \frac{(2r)^n}{n!} $.
By a discrete cross-polytope of radius $ r \in \mathbb{Z} $ we mean the
set $ (rC_n) \cap \mathbb{Z}^n $ of cardinality
$ \sum_{j \geqslant 0} 2^j \binom{n}{j} \binom{r}{j} $.

A lattice packing of cross-polytopes of radius $ r $ is an arrangement
of these bodies in $ \mathbb{R}^n $ of the form
$ {\mathcal L} + r C_n = \{ \bs{x} + \bs{y} : \bs{x} \in {\mathcal L}, \bs{y} \in r C_n \} $,
where $ \mathcal L $ is a lattice (a discrete additive subgroup of
$ \mathbb{R}^n $) of minimum $ \ell_1 $ distance $ \geqslant\! 2r $.
The density of such a packing is the fraction of space covered by the
cross-polytopes;
it can be computed as the ratio of the volume of a cross-polytope and
the determinant of the lattice $ \mathcal L $ (the volume of its
fundamental cell), that is $ \frac{(2r)^n}{n! \det{\mathcal L}} $.

Rush \cite{rush} gave a construction of lattice packings of cross-polytopes
in $ \mathbb{R}^n $, for $ n = \frac{p-1}{2} $, $ p $ an odd prime, of density
%\begin{subequations}
\begin{align}
\label{eq:density_rush}
  \max_{1 \leqslant t \leqslant n} \frac{ (2t + 1)^{n} }{ n! \, (2n + 1)^t } .
\end{align}
The value of $ t $ for which the maximum in \eqref{eq:density_rush} is attained
is \cite{rush}
\begin{align}
\label{eq:maxt}
  t = \frac{ n }{ \ln(2n + 1) } - \frac{1}{2} .
\end{align}
%\end{subequations}
In the present paper we describe two constructions of lattice packings in
arbitrary dimension, one based on Reed--Solomon codes (Section~\ref{sec:RS})
and the other based on the notion of Sidon sets in finite Abelian groups
(Section~\ref{sec:Sidon}), both of which exceed the density in
\eqref{eq:density_rush} by a factor that scales as
$ 2^{\frac{n}{\ln n}(1 + o(1))} $ when $ n \to \infty $.

As pointed out in \cite{rush}, packings of much higher density can be
shown to exist by non-constructive methods such as the Minkowski--Hlawka
(MH) theorem: the MH lower bound on the lattice packing density of
cross-polytopes is of the form $ 2^{-n + o(n)} $ \cite{rogers, rush3},
while \eqref{eq:density_rush} scales as $ e^{-n \ln\ln n + {\mathcal O}(n)} $.
The most efficient known algorithms for constructing lattices that achieve
the MH bound (up to lower order terms in the exponent) have complexity
$ 2^{{\mathcal O}(n \log n)} $ and are obtained by exhaustive search methods
such as the Gilbert--Varshamov bound from coding theory \cite{rush3}.%
\footnote{Regarding the MH theorem, we point the reader to the recent paper
\cite{gargava+serban} and references therein for (subexponential) improvements
of this bound in the sphere-packing case, and constructions of complexity
$ 2^{{\mathcal O}(n \log n)} $ that achieve it. We also note that, in the
case of packing superballs ($ \ell_\sigma $ balls), exponential improvements
of the MH bound are known when $ \sigma > 2 $ \cite{rush3, elkies}. In the
case of cross-polytopes ($ \sigma = 1 $), it is conjectured \cite{rush2}
that no such exponential improvement is possible.}
It is desirable, however, both from the mathematical viewpoint and in
applications, to have at one's disposal more {efficient} and {explicit}
constructions of packings.

How one defines constructiveness of a packing is to an extent subjective,
but one very natural definition \cite{litsyn} is that, for a given family
of lattice packings, there exists an algorithm for constructing the lattices
(e.g., for producing their basis vectors) whose complexity grows polynomially
with the dimension $ n $.
The first family of packings described in this paper is constructive in this
sense.
While it is not clear whether the second family can also be constructed in
polynomial time, we do show that it can be constructed by using probabilistic
algorithms of subexponential complexity $ 2^{{\mathcal O}(\sqrt{n \log n})} $.
Furthermore, the second family can also be considered constructive in the
(weak) sense that it ``arises from other natural mathematical objects'',
see the discussion by Litsyn and Tsfasman \cite[Section 3]{litsyn}.
Finally, as we point out in Section~\ref{sec:discrete}, the construction
based on Sidon sets is interesting for the following reason as well: when
the radius $ r $ is fixed, this construction is of polynomial complexity
and produces densest known packings of discrete cross-polytopes in
$ \mathbb{Z}^n $ in the asymptotic regime $ n \to \infty $, for any
$ r \geqslant 3 $.

To conclude the introductory part of the paper, let us mention that dense
packings of cross-polytopes also induce reasonably dense packings of superballs
($ \ell_\sigma $ balls) for small values of $ \sigma $ ($ 1 \leqslant \sigma < 2 $)
by using the trivial method of inscribing a superball inside a cross-polytope, see
\cite{rush}.

\section{Construction based on {R}eed--{S}olomon codes}
\label{sec:RS}

Given positive integers $ n, t $ and a prime $ p $ satisfying
$ 1 \leqslant t \leqslant n < p $, let $ {\mathcal C}_{n,t;p}^{\textsc{rs}} $
be the set of all vectors $ \bs{x} \in \mathbb{Z}_p^n $ satisfying the
congruences $ \sum_{i=1}^n i^s x_i = 0  \pmod {p} $, for $ s = 0, 1, \ldots, t-1 $.
The set $ {\mathcal C}_{n,t;p}^{\textsc{rs}} $ is a (generalized)
Reed--Solomon code of length $ n $ over the field $ GF(p) \equiv \mathbb{Z}_{p} $
\cite[Chapter 5]{roth}.
Both the cardinality, $ |{\mathcal C}_{n,t;p}^{\textsc{rs}}| =  p^{n-t} $,
and the minimum distance properties of the code are controlled by the
parameter $ t $.
Namely, the minimum Hamming distance
%\footnote{Hamming distance between two vectors is the number of coordinates
%at which they differ.}
of $ {\mathcal C}_{n,t;p}^{\textsc{rs}} $ is $ t + 1 $ \cite[Proposition 5.1]{roth},
while its minimum Lee distance%
\footnote{Lee distance is essentially the $ \ell_1 $ distance defined on
the torus $ \mathbb{Z}_m^n $. A code in $ \mathbb{Z}_m^n $ having minimum
Lee distance $ d $ can therefore be thought of as a packing of discrete
cross-polytopes ($ \ell_1 $ balls) of radius $ \lfloor \frac{d-1}{2} \rfloor $
in the torus, see \cite{golomb+welch}.}
is lower bounded by $ 2 t $ \cite[Theorem 3]{roth+siegel}. %\cite[Theorem 10.5]{roth}

Let
$ {\mathcal L}_{n,t;p}^{\textsc{rs}} = p \mathbb{Z}^n + {\mathcal C}_{n,t;p}^{\textsc{rs}} $
be the lattice obtained by periodically extending the above code to all of
$ \mathbb{Z}^n $.
Written explicitly,
\begin{align}
\label{eq:latticeRS}
  {\mathcal L}_{n,t;p}^{\textsc{rs}}
   = \left\{ \bs{x} \in \mathbb{Z}^n \;:\; \sum_{i=1}^n i^s x_i = 0  \pmod {p}, \quad
	           s = 0, 1, \ldots, t-1 \right\} .
\end{align}
The fact that the minimum Lee distance of the code
$ {\mathcal C}_{n,t;p}^{\textsc{rs}} $ is at least $ 2 t $ implies that
the minimum $ \ell_1 $ distance of the lattice $ {\mathcal L}_{n,t;p}^{\textsc{rs}} $
is at least $ 2 t $.
This lattice therefore induces a packing of cross-polytopes of radius $ t $.
Since $ \det {\mathcal L}_{n,t;p}^{\textsc{rs}} = p^t $, the density of
the packing is $ \frac{(2t)^n}{n!\,p^t} $.
Moreover, the lattice can be constructed efficiently, which is evident from
its definition.
We have just shown the following.

\begin{theorem}
\label{thm:RS}
For every $ n \geqslant 1 $, the cross-polytope can be constructively lattice
packed in $ \mathbb{R}^n $ with density
\begin{align}
\label{eq:densityRS}
  \max_{1 \leqslant t \leqslant n} \frac{ (2 t)^{n} }{ n! \operatorname{p}(n)^{t} } ,
\end{align}
where $ \prost(n) $ is the smallest prime larger than $ n $.
\end{theorem}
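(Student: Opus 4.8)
The statement simply records what the preceding construction yields, so the plan is to assemble the ingredients into a self-contained argument. Fix $ n \geqslant 1 $ and set $ p = \prost(n) $, the smallest prime with $ p > n $; this guarantees $ n < p $, so that for every $ t \in \{1, \dots, n\} $ the code $ {\mathcal C}_{n,t;p}^{\textsc{rs}} $, and hence the lattice $ {\mathcal L}_{n,t;p}^{\textsc{rs}} $ of \eqref{eq:latticeRS}, is well defined. By the same elementary optimization that produces \eqref{eq:maxt}, the value of $ t $ maximizing \eqref{eq:densityRS} is of order $ n/\ln p $, hence (for all $ n $ of interest) well below $ p/2 $; so there is no loss in also assuming $ 2t \leqslant p $, which will be convenient below.

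The only point requiring an argument is that the minimum $ \ell_1 $ distance of $ {\mathcal L}_{n,t;p}^{\textsc{rs}} $ is at least $ 2t $; by translation invariance this is the assertion that $ \| \bs{x} \|_1 \geqslant 2t $ for every $ \bs{x} \in {\mathcal L}_{n,t;p}^{\textsc{rs}} \setminus \{ \bs{0} \} $. Writing $ \bar{\bs{x}} \in \mathbb{Z}_p^n $ for the reduction of $ \bs{x} $ modulo $ p $, one has $ |x_i| \geqslant \min_{k \in \mathbb{Z}} |x_i - kp| = w_{\mathrm{Lee}}(\bar{x}_i) $ for each coordinate, and summing gives $ \| \bs{x} \|_1 \geqslant w_{\mathrm{Lee}}(\bar{\bs{x}}) $. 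If $ \bar{\bs{x}} \neq \bs{0} $, then $ \bar{\bs{x}} $ is a nonzero codeword of $ {\mathcal C}_{n,t;p}^{\textsc{rs}} $, so $ w_{\mathrm{Lee}}(\bar{\bs{x}}) \geqslant 2t $ by \cite[Theorem 3]{roth+siegel}; if $ \bar{\bs{x}} = \bs{0} $ and $ \bs{x} \neq \bs{0} $, then some coordinate of $ \bs{x} $ is a nonzero multiple of $ p $, so $ \| \bs{x} \|_1 \geqslant p \geqslant 2t $. Either way $ \| \bs{x} \|_1 \geqslant 2t $.

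Consequently the bodies $ \bs{x} + t C_n $, $ \bs{x} \in {\mathcal L}_{n,t;p}^{\textsc{rs}} $, have pairwise disjoint interiors --- an overlap of interiors would yield a nonzero lattice vector of $ \ell_1 $ norm $ < 2t $ --- so $ {\mathcal L}_{n,t;p}^{\textsc{rs}} + t C_n $ is a lattice packing of cross-polytopes of radius $ t $. Its density is $ \operatorname{vol}(t C_n) / \det {\mathcal L}_{n,t;p}^{\textsc{rs}} = (2t)^n / (n!\, p^t) $, where $ \operatorname{vol}(t C_n) = (2t)^n/n! $ and $ \det {\mathcal L}_{n,t;p}^{\textsc{rs}} = p^t $ because $ [\,\mathbb{Z}^n : {\mathcal L}_{n,t;p}^{\textsc{rs}}\,] = [\,\mathbb{Z}_p^n : {\mathcal C}_{n,t;p}^{\textsc{rs}}\,] = p^n/p^{n-t} = p^t $. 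A basis of $ {\mathcal L}_{n,t;p}^{\textsc{rs}} $ is obtained by solving the $ t $ linear congruences in \eqref{eq:latticeRS} over $ \mathbb{Z}_p $, which takes $ \operatorname{poly}(n) $ operations, so the packing is constructive. Maximizing over $ t \in \{1, \dots, n\} $ gives \eqref{eq:densityRS}.

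I do not anticipate a real obstacle: the two substantive facts --- the lower bound $ 2t $ on the minimum Lee distance of $ {\mathcal C}_{n,t;p}^{\textsc{rs}} $ and the identity $ \det {\mathcal L}_{n,t;p}^{\textsc{rs}} = p^t $ --- are already quoted, and the only mildly delicate step is the reduction-mod-$ p $ argument in the second paragraph, in particular the case $ \bar{\bs{x}} = \bs{0} $, which is exactly where the harmless normalization $ 2t \leqslant p $ is invoked.
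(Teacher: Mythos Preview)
Your argument is correct and follows the same route as the paper: apply Construction~A to the Reed--Solomon code, invoke \cite[Theorem~3]{roth+siegel} for the Lee-distance bound, read off $\det\mathcal{L}=p^t$, and maximize over~$t$. The paper simply asserts the implication ``minimum Lee distance $\geqslant 2t$ $\Rightarrow$ minimum $\ell_1$ distance $\geqslant 2t$'' as a standard fact about Construction~A, whereas you spell out the two-case reduction-mod-$p$ argument and make explicit the normalization $2t\leqslant p$ needed in the $\bar{\bs{x}}=\bs{0}$ case; this is a welcome clarification rather than a different approach.
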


The value of $ t $ that maximizes the expression in \eqref{eq:densityRS} also
maximizes $ n \ln(2 t) - t \ln\prost(n) $, and by differentiating the latter
we find this value to be
\begin{align}
\label{eq:maxtRS}
  t = \frac{n}{\ln\prost(n)} .
\end{align}

The same method of constructing lattices from codes was used in \cite{rush}
(the so-called Construction A from \cite[Chapter 5]{conway+sloane}), but
the starting point therein was the Berlekamp's negacyclic code \cite[Chapter 10.6]{roth}
of length $ n = \frac{p-1}{2} $, where $ p $ is an odd prime, and minimum
Lee distance $ 2t + 1 $.

The main advantages of the construction \eqref{eq:latticeRS} with respect
to \cite{rush} are the following:
\begin{inparaenum}
\item[(a)]
the packing \eqref{eq:latticeRS} is defined in all dimensions,
\item[(b)]
the construction \eqref{eq:latticeRS} is explicit, while that from \cite{rush}
requires finding a primitive element in the field $ GF(p) $, and
\item[(c)]
the packing density of \eqref{eq:latticeRS} is larger by a factor that
scales as $ 2^{\frac{n}{\ln n}(1 + o(1))} $ when $ n \to \infty $.
\end{inparaenum}
To justify (c), suppose that $ n = \frac{p-1}{2} $ for an odd prime $ p $,
and note that the ratio of the densities from \eqref{eq:densityRS} and
\eqref{eq:density_rush} equals $ ( \frac{2t}{2t + 1} )^n ( \frac{2n+1}{\prost(n)} )^t $,
where $ t \sim \frac{n}{\ln n} $ (see \eqref{eq:maxt} and \eqref{eq:maxtRS}).
Recalling that $ \prost(n) = n + o(n) $ for sufficiently large $ n $ \cite[Section 1.4.1]{primes},
our claim follows.

\section{Construction based on {S}idon sets}
\label{sec:Sidon}

A collection of elements $ b_1, b_2, \ldots, b_n $ from an Abelian group
$ (G, +) $ having the property that the sums $ b_{i_1} + b_{i_2} + \cdots + b_{i_h} $,
$ 1 \leqslant i_1 \leqslant i_2 \leqslant \cdots \leqslant i_h \leqslant n $,
are all different is called a Sidon set of order $ h $.%
\footnote{Or a Sidon sequence of order $ h $, or a $ B_h $ sequence.
These objects have been studied quite extensively \cite{obryant}.
For more on their connection to lattice packing problems,
see \cite{kovacevic+tan_sidma, kovacevic+tan}.}
An equivalent way of expressing this property is that the sums
\begin{align}
\label{eq:sidon}
  \sum_{i=1}^n r_i b_i , \quad  \textnormal{where}\;\;  r_i \in \mathbb{Z} , \;
	r_i \geqslant 0 , \;  \sum_{i=1}^n r_i = h , \quad \textnormal{are all different} .
\end{align}
Here $ r_i b_i $ represents the sum of $ r_i $ copies of the element $ b_i \in  G $.
Two elegant constructions of Sidon sets were described by Bose and Chowla in
\cite{bose+chowla}, one of which is repeated next for completeness.

For a prime power $ n $, let $ \alpha_1 = 0, \alpha_2, \ldots, \alpha_n $ be
the elements of the Galois field $ GF(n) $, and $ \beta $ a primitive element
of the extended field $ GF(n^{h}) $ (i.e., a generator of its multiplicative
group).
Let $ b_1, b_2, \ldots, b_n $ be the numbers from the set $ \{1, 2, \ldots, n^{h}-1\} $
defined by
\begin{align}
\label{eq:bi}
  \beta^{b_i} = \beta + \alpha_i ,  \quad i = 1, \ldots, n .
\end{align}
Then the numbers $ b_1 = 1, b_2, \ldots, b_n $, thought of as elements of
the cyclic group $ (\mathbb{Z}_{n^h - 1}, +) $, satisfy the condition \eqref{eq:sidon}.
To see that they do, suppose, for the sake of contradiction, that
$ b_{i_1} + b_{i_2} + \cdots + b_{i_h} = b_{j_1} + b_{j_2} + \cdots + b_{j_h} $
for two different sets of indices $ \{i_k\} $, $ \{j_k\} $.
Then it would follow from \eqref{eq:bi} that
\begin{align}
\label{eq:bi2}
  (\beta + \alpha_{i_1})(\beta + \alpha_{i_2}) \cdots (\beta + \alpha_{i_h})
  = (\beta + \alpha_{j_1})(\beta + \alpha_{j_2}) \cdots (\beta + \alpha_{j_h})
\end{align}
and, after canceling the $ \beta^h $ terms, that $ \beta $ is a root of
a polynomial of degree $ <\! h $ with coefficients in $ GF(n) $, which is
not possible.
When the desired cardinality $ n $ is not a prime power, one can use the
same method to produce a Sidon set $ b_1, b_2, \ldots, b_{\pprost(n)} $,
where $ \pprost(n) $ is the smallest prime power greater than or equal to
$ n $, and keep any $ n $ of its $ \pprost(n) $ elements (a subset of a
Sidon set is also a Sidon set).
%Alternatively, one can use different constructions of Sidon sets...
%For example, the second construction from \cite{bose+chowla} produces a Sidon
%set of size $ n $ in the group $ \mathbb{Z}_{(n-1)^t + (n-1)^{t-1} + \cdots + 1} $,
%when $ n - 1 $ is a prime power.

Let $ g^{\textsc{s}}(h, n) $ denote the size of the smallest Abelian group
containing a Sidon set of order $ h $ and cardinality $ n $.
From the Bose--Chowla construction just described we know that
$ g^{\textsc{s}}(h, n) < \pprost(n)^{h} $.

\begin{theorem}
\label{thm:Sidon}
For every $ n \geqslant 1 $, the cross-polytope can be constructively
(in the weak sense) lattice packed in $ \mathbb{R}^n $ with density
\begin{align}
\label{eq:density}
	\max_{t \geqslant 1} \frac{ (2t)^{n-1} }{ n! \, g^{\textsc{s}}(t-1, n) } \; > \;
  \max_{t \geqslant 1} \frac{ (2t)^{n-1} }{ n! \pprost(n)^{t-1} } ,
\end{align}
where $ \pprost(n) $ is the smallest prime power greater than or equal to $ n $.
\end{theorem}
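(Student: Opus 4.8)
The plan is to follow the template of Section~\ref{sec:RS}: build an explicit sublattice of $\mathbb{Z}^n$ whose defining congruences come from a Sidon set, control its determinant, and lower bound its minimum $\ell_1$ distance directly from property~\eqref{eq:sidon}. Applying the same Construction~A passage from code to lattice, only with the "code" now prescribed by a $B_h$ sequence, should give a packing of radius-$t$ cross-polytopes.

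Fix $t \ge 1$ and put $h = t-1$. Let $\{b_1, \ldots, b_n\}$ be a Sidon set of order $h$ in an Abelian group $G$ with $|G| = g^{\textsc{s}}(h,n)$; for the inequality on the right of~\eqref{eq:density} take instead $G = \mathbb{Z}_{\pprost(n)^{h} - 1}$ with $\{b_i\}$ the Bose--Chowla set (for $n$ not a prime power, pass to a subset of size $n$ as explained above). I would consider the lattice
\[
 \mathcal{L} = \Bigl\{ \bs{x} \in \mathbb{Z}^n \;:\; \sum_{i=1}^n x_i b_i = 0 \text{ in } G, \ \sum_{i=1}^n x_i \equiv 0 \pmod{2t} \Bigr\} ,
\]
which is the kernel of the homomorphism $\mathbb{Z}^n \to G \oplus \mathbb{Z}_{2t}$ sending $\bs{x}$ to $\bigl(\sum_i x_i b_i, \ \sum_i x_i \bmod 2t\bigr)$, and hence a full-rank sublattice of $\mathbb{Z}^n$ (the quotient being finite). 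Thus $\det \mathcal{L} = [\mathbb{Z}^n : \mathcal{L}] \le |G \oplus \mathbb{Z}_{2t}| = 2t\,g^{\textsc{s}}(h,n)$, with equality whenever $\{b_i\}$ generates $G$; this may be assumed (a proper subgroup generated by the $b_i$ would be a smaller group still containing a Sidon set of order $h$ and cardinality $n$, contradicting minimality), and it holds in particular for the Bose--Chowla set since $b_1 = 1$.

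For the distance bound I would first record the elementary observation that a Sidon set of order $h$ is also a Sidon set of every order $k$ with $0 \le k \le h$: if two distinct $k$-element multisets of the $b_i$ had equal sums, padding both by one and the same $(h-k)$-element multiset would produce two distinct $h$-element multisets with equal sums, contradicting~\eqref{eq:sidon}. Now let $\bs{x} \in \mathcal{L} \setminus \{\bs{0}\}$ and suppose $\|\bs{x}\|_1 < 2t$. Since $\bigl|\sum_i x_i\bigr| \le \|\bs{x}\|_1 < 2t$ and $2t \mid \sum_i x_i$, we get $\sum_i x_i = 0$; writing $x_i = x_i^+ - x_i^-$ with $x_i^\pm \ge 0$ and $x_i^+ x_i^- = 0$, this gives $\sum_i x_i^+ = \sum_i x_i^- =: k$, and $\|\bs{x}\|_1 = 2k < 2t$ forces $k \le h$. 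The first congruence then reads $\sum_i x_i^+ b_i = \sum_i x_i^- b_i$ in $G$, i.e.\ two $k$-element sums from the Sidon set coincide; by the order-$k$ Sidon property their multisets agree, so $x_i^+ = x_i^-$, hence $x_i = 0$ for all $i$ -- a contradiction. Therefore $\mathcal{L}$ has minimum $\ell_1$ distance $\ge 2t$ and induces a lattice packing of radius-$t$ cross-polytopes in $\mathbb{R}^n$ of density $\frac{(2t)^n}{n!\,\det \mathcal{L}} \ge \frac{(2t)^{n-1}}{n!\,g^{\textsc{s}}(t-1,n)}$; maximising over $t$ yields~\eqref{eq:density}, while the Bose--Chowla estimate $g^{\textsc{s}}(t-1,n) \le \pprost(n)^{t-1} - 1$ gives the strict inequality. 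Weak constructiveness is inherited from the Bose--Chowla construction, which requires only a primitive element of $GF(\pprost(n)^{t-1})$, obtainable within the complexity claimed in Section~\ref{sec:intro}.

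The Sidon part of the argument is immediate from~\eqref{eq:sidon} once the order-reduction remark is in place, so the step needing the most care is the determinant bookkeeping: arranging the auxiliary congruence $\sum_i x_i \equiv 0 \pmod{2t}$ so that it contributes exactly the factor $2t$ (making the exponent $n-1$, rather than $n$, in~\eqref{eq:density} sharp) while at the same time being strong enough to force $\sum_i x_i = 0$ in the distance argument. A weaker modulus would leave $\sum_i x_i$ possibly nonzero, so that the positive and negative parts of $\bs{x}$ need not have the same size and the Sidon property could not be invoked directly; a larger modulus would only inflate the determinant. I also want to double-check the edge behaviour at small $t$ (e.g.\ $t=1$, where $g^{\textsc{s}}(0,n)=1$ and the construction degenerates to a $D_n$-type lattice) to be sure the stated density and the strict inequality hold as claimed.
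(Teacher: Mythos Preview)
Your proposal is correct and follows essentially the same route as the paper: the lattice, the two defining congruences, the case split on whether $\sum_i x_i$ vanishes, and the appeal to the Sidon property are all identical, with your order-reduction remark playing the same role as the paper's padding of $\bs{r},\bs{s}$ to total weight exactly $t-1$. One small caveat: your claimed equality $\det\mathcal{L}=2t\,|G|$ ``whenever $\{b_i\}$ generates $G$'' is not quite right (e.g.\ $n=2$, $G=\mathbb{Z}_4$, $b_1=1$, $b_2=3$, $2t=4$ gives image of size $8$, not $16$), but since you only use the inequality $\det\mathcal{L}\le 2t\,|G|$ for the density bound, this does not affect the argument.
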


The value of $ t $ that maximizes the expression on the right-hand side
of the inequality \eqref{eq:density} is
\begin{align}
\label{eq:mymaxt}
  t = \frac{n-1}{\ln\pprost(n)} .
\end{align}

\begin{proof}[Proof of Theorem \ref{thm:Sidon}]
Given a Sidon set $ B = \{ b_1, b_2, \ldots, b_n \} $ of order $ t - 1 $ in
an Abelian group $ G $, define the following lattice:
\begin{align}
\label{eq:L}
  {\mathcal L}_{n,t;B}^{\textsc{s}}
   = \left\{ \bs{x} \in \mathbb{Z}^n \;:\; \sum_{i=1}^n x_i = 0  \pmod {2t}, \quad
	           \sum_{i=1}^n x_i b_i = 0 \right\} .
\end{align}
Here $ x_i b_i $ represents the sum of $ |x_i| $ copies of the element $ b_i \in  G $
(resp.\ $ -b_i \in  G $) if $ x_i \geqslant 0 $ (resp.\ $ x_i < 0 $).
We claim that the minimum $ \ell_1 $ distance of the points in this lattice
is $ 2 t $.
To see this, note that any two points $ \bs{x}, \bs{y} \in {\mathcal L}_{n,t;B}^{\textsc{s}} $
with $ \sum_{i=1}^n x_i  \neq  \sum_{i=1}^n y_i $ satisfy
$ \sum_{i=1}^n (x_i - y_i) = 2 t k $ for a nonzero integer $ k $.
They must be at distance at least $ 2 t $ because
$ \sum_{i=1}^n |x_i - y_i| \geqslant \left| \sum_{i=1}^n (x_i - y_i) \right| = 2 t |k| $.
Now consider $ \bs{x}, \bs{y} \in {\mathcal L}_{n,t;B}^{\textsc{s}} $,
$ \bs{x} \neq \bs{y} $, with $ \sum_{i=1}^n x_i  =  \sum_{i=1}^n y_i $.
Suppose that, for two such points, $ \sum_{i=1}^n |x_i - y_i| \leqslant 2 (t - 1) $
(the distance is in this case necessarily even).
Then one can write $ \bs{x} + \bs{r} = \bs{y} + \bs{s} $, for some
$ \bs{r}, \bs{s} \in \mathbb{Z}^n $, $ \bs{r} \neq \bs{s} $, with
$ r_i \geqslant 0 $, $ s_i \geqslant 0 $, and
$ \sum_{i=1}^n r_i = \sum_{i=1}^n s_i = t - 1 $.
This, together with the fact that $ \sum_{i=1}^n x_i b_i = \sum_{i=1}^n y_i b_i = 0 $
(see~\eqref{eq:L}), implies $ \sum_{i=1}^n r_i b_i = \sum_{i=1}^n s_i b_i $.
As this contradicts \eqref{eq:sidon} (with $ h = t - 1 $), our assumption
that $ \sum_{i=1}^n |x_i - y_i| \leqslant 2 (t - 1) $ must be wrong.
Therefore, as claimed, the minimum $ \ell_1 $ distance of the lattice
$ {\mathcal L}_{n,t;B}^{\textsc{s}} $ is $ 2 t $, implying that it induces
a packing of cross-polytopes of radius $ t $.
Since $ \det {\mathcal L}_{n,t;B}^{\textsc{s}} = 2 t |G| $, the obtained
packing density is $ \frac{(2t)^{n-1}}{n! \, |G|} $.
Furthermore, by the result from \cite{bose+chowla} cited above we may take
$ G = \mathbb{Z}_{\pprost(n)^{t-1} - 1} $, implying the lower bound in
\eqref{eq:density}.
\end{proof}

The density obtained in Theorem~\ref{thm:Sidon} is comparable to that from
Theorem~\ref{thm:RS}.
The former is also larger than the density obtained in \cite{rush} by a
factor that scales as $ 2^{\frac{n}{\ln n}(1 + o(1))} $ when $ n \to \infty $,
see \eqref{eq:density_rush} and \eqref{eq:density}.
An additional advantage of the construction based on Sidon sets, compared
to both Theorem~\ref{thm:RS} and \cite{rush}, is that the packing is defined
for every $ t $, i.e., the dimension and the radius are independent
variables in this approach.

It should be noted, however, that the construction complexity of the family
of lattices \eqref{eq:L} is higher.
In particular, constructing a Sidon set of cardinality $ n $ and order $ t-1 $
that was described in \cite{bose+chowla} (see the second paragraph of this section)
involves:
\begin{inparaenum}
\item[(i)]
finding a primitive element $ \beta $ in the field $ GF(\pprost(n)^{t-1}) $, and
\item[(ii)]
finding solutions in $ \{1, 2, \ldots, \pprost(n)^{t-1} - 1\} $ to $ n $ equations
of the form \eqref{eq:bi}.
\end{inparaenum}
Note that, when $ t \sim \frac{n}{\ln n} $ (see \eqref{eq:mymaxt}), the required
field size is exponential in the dimension, $ \pprost(n)^{t-1} = e^{n+o(n)} $.
The problem (ii) is an instance of the discrete logarithm problem which, as recent
advances have shown \cite{granger, kleinjung}, can be solved in expected
quasi-polynomial time $ 2^{{\mathcal O}(\log^2 n)} $.
As for problem (i) -- finding a primitive element in $ GF(\pprost(n)^{t-1}) $,
$ t \sim \frac{n}{\ln n} $ -- a result of Shoup \cite{shoup} implies that it
can be reduced in time polynomial in $ n $ to the problem of (i')~\emph{testing}
whether a given element $ \beta $ is primitive.
A classical way of solving (i') is by factoring $ \pprost(n)^{t-1} - 1 $ (the
order of the multiplicative group of $ GF(\pprost(n)^{t-1}) $) and checking
whether $ \beta^x = 1 $ for any non-trivial factor $ x $.
Factoring numbers of this magnitude can be performed in expected time%
\footnote{Faster methods are known, such as the number field sieve, but they
are not rigorous and rely on heuristics \cite[Chapter 6]{primes}.}
$ 2^{{\mathcal O}(\sqrt{n \log n})} $ \cite[Chapter 6]{primes}.
In conclusion, the complexity of constructing \eqref{eq:L} is dominated by the
problem of finding primitive elements in large finite fields, and can be upper
bounded by $ 2^{{\mathcal O}(\sqrt{n \log n})} $.

\section{Lattice packings of discrete cross-polytopes in $ \mathbb{Z}^n $}
\label{sec:discrete}

A lattice packing of discrete cross-polytopes of radius $ t \in \{1, 2, \ldots\} $
is an arrangement of discrete cross-polytopes in $ \mathbb{Z}^n $ of the
form $ {\mathcal L} + (t C_n \cap \mathbb{Z}^n) $, where $ \mathcal L $ is
a sublattice of $ \mathbb{Z}^n $ with minimum $ \ell_1 $ distance $ >\! 2t $
(note the strict inequality here).
The density of such a packing -- the fraction of points in $ \mathbb{Z}^n $
covered by the cross-polytopes -- is
$ \frac{ |(t C_n) \cap \mathbb{Z}^n| }{ \det \mathcal{L} } $.

Apart from being interesting on their own, packings in $ \mathbb{Z}^n $ are
useful for producing packings in $ \mathbb{R}^n $, as we have seen in the
previous two sections.
In fact, most lattices described in the literature for the purpose of packing
various convex bodies in $ \mathbb{R}^n $, are sublattices of $ \mathbb{Z}^n $.
Moreover, one can show that optimal lattice packings of, e.g., cross-polytopes
in $ \mathbb{R}^n $, for any fixed $ n $, can be obtained via optimal lattice
packings of discrete cross-polytopes of radius $ t \to \infty $ in $ \mathbb{Z}^n $
\cite[Remark 2.2]{kovacevic+tan_sidma}.
Discrete packings are also of interest in coding theory where they frequently
represent the underlying geometric problem.
For example, an appropriate finite restriction of a lattice
$ {\mathcal L} \subseteq \mathbb{Z}^n $ with minimum $ \ell_1 $ distance
$ \geqslant\!2t+1 $ can be interpreted as a code correcting $ t $ errors
of certain type \cite{kovacevic+tan}.

We state below the discrete version of Theorem~\ref{thm:Sidon}, as it may
be of separate interest.
The construction based on Sidon sets appears to produce very dense packings
of discrete cross-polytopes, at least in the case when the radius $ t $ is
fixed and $ n \to \infty $.
Furthermore, in this regime the construction is of polynomial complexity.

\begin{theorem}
Fix an arbitrary positive integer $ t $.
For every $ n \geqslant 1 $, the discrete cross-polytope of radius $ t $
can be constructively lattice packed in $ \mathbb{Z}^n $ with density
\begin{align}
\label{eq:densityZ}
	\frac{ \sum_{j \geqslant 0} 2^j \binom{n}{j} \binom{t}{j} }{ (2t+1) \, g^{\textsc{s}}(t, n) } \; > \;
  \frac{ \sum_{j \geqslant 0} 2^j \binom{n}{j} \binom{t}{j} }{ (2t+1) \pprost(n)^{t} } ,
\end{align}
where $ \pprost(n) $ is the smallest prime power greater than or equal to $ n $.
\end{theorem}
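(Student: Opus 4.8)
The plan is to mimic the proof of Theorem~\ref{thm:Sidon}, but with the modular condition on $\sum_i x_i$ replaced by a stronger one so as to achieve \emph{strict} separation of the discrete cross-polytopes. Given a Sidon set $B = \{b_1, \ldots, b_n\}$ of order $t$ (note: order $t$, not $t-1$) in an Abelian group $G$, I would define
\begin{align*}
  {\mathcal L}_{n,t;B}
   = \left\{ \bs{x} \in \mathbb{Z}^n \;:\; \sum_{i=1}^n x_i \equiv 0 \pmod{2t+1}, \quad
	           \sum_{i=1}^n x_i b_i = 0 \right\} ,
\end{align*}
with the convention for $x_i b_i$ as in \eqref{eq:L}. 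The claim to establish is that the minimum $\ell_1$ distance of this lattice is at least $2t+1$. Once that is shown, ${\mathcal L}_{n,t;B}$ induces a packing of discrete cross-polytopes of radius $t$ in $\mathbb{Z}^n$ (since the required minimum distance is strictly greater than $2t$), and because $\det {\mathcal L}_{n,t;B} = (2t+1)\,|G|$, the density is $\frac{|(tC_n)\cap\mathbb{Z}^n|}{(2t+1)\,|G|}$. Taking $G = \mathbb{Z}_{\pprost(n)^{t}-1}$ via Bose--Chowla (so $|G| < \pprost(n)^{t}$, and in fact $g^{\textsc{s}}(t,n) \leqslant \pprost(n)^t - 1$) then gives both the equality and the strict inequality in \eqref{eq:densityZ}, and also yields constructiveness in the usual polynomial-time sense because, $t$ being fixed, the ambient field $GF(\pprost(n)^t)$ has size polynomial in $n$.

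For the distance bound I would split into two cases exactly as in the proof of Theorem~\ref{thm:Sidon}. If $\bs{x},\bs{y} \in {\mathcal L}_{n,t;B}$ have $\sum_i x_i \neq \sum_i y_i$, then $\sum_i (x_i - y_i) = (2t+1)k$ for a nonzero integer $k$, hence $\sum_i |x_i - y_i| \geqslant (2t+1)|k| \geqslant 2t+1$. If instead $\sum_i x_i = \sum_i y_i$ but $\bs{x}\neq\bs{y}$, suppose toward a contradiction that $\sum_i |x_i - y_i| \leqslant 2t$; since $\sum_i(x_i-y_i)=0$ this sum is even, so actually $\sum_i |x_i - y_i| \leqslant 2t$ forces it to be $\leqslant 2t$ with the positive and negative parts of $\bs{x}-\bs{y}$ each summing to at most $t$. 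Write $\bs{x} + \bs{r} = \bs{y} + \bs{s}$ with $\bs{r},\bs{s}\in\mathbb{Z}^n$, $r_i,s_i\geqslant 0$, and $\sum_i r_i = \sum_i s_i =: m \leqslant t$ (pad both sides by the same amount so that the common total is exactly $t$ if one wishes to invoke \eqref{eq:sidon} directly with $h=t$; alternatively invoke it with $h=m$). Combined with $\sum_i x_i b_i = \sum_i y_i b_i = 0$ this gives $\sum_i r_i b_i = \sum_i s_i b_i$, contradicting the Sidon property \eqref{eq:sidon}. Hence the minimum distance is at least $2t+1$, as needed.

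The one genuinely delicate point — and the step I expect to require the most care — is the bookkeeping in the second case when $\sum_i|x_i-y_i|$ is strictly less than $2t$: one must make sure the decomposition $\bs{x}+\bs{r}=\bs{y}+\bs{s}$ can be arranged with \emph{equal} nonnegative coordinate sums on both sides (take $r_i = \max(y_i-x_i,0)$ and $s_i = \max(x_i-y_i,0)$, so $\sum r_i = \sum s_i$ automatically because $\sum(x_i-y_i)=0$), and then that this common value is a legitimate $h$ for which \eqref{eq:sidon} applies, namely $h \leqslant t$; since a Sidon set of order $t$ is also one of every order $h\leqslant t$ (fewer summands), this is fine, though it is worth stating explicitly. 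Everything else is a direct transcription of the argument already given for Theorem~\ref{thm:Sidon}, with $2t$ replaced by $2t+1$ and $t-1$ replaced by $t$, together with the observation that the cardinality of $(tC_n)\cap\mathbb{Z}^n$ is $\sum_{j\geqslant 0} 2^j\binom{n}{j}\binom{t}{j}$ as recorded in the introduction.

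\begin{proof}
Given a Sidon set $ B = \{ b_1, \ldots, b_n \} $ of order $ t $ in an Abelian group $ G $, define
\begin{align}
\label{eq:LZ}
  {\mathcal L}_{n,t;B}
   = \left\{ \bs{x} \in \mathbb{Z}^n \;:\; \sum_{i=1}^n x_i = 0  \pmod {2t+1}, \quad
	           \sum_{i=1}^n x_i b_i = 0 \right\} ,
\end{align}
with $ x_i b_i $ interpreted as in \eqref{eq:L}.
We claim that the minimum $ \ell_1 $ distance of $ {\mathcal L}_{n,t;B} $ is at least $ 2t+1 $.
Let $ \bs{x}, \bs{y} \in {\mathcal L}_{n,t;B} $ with $ \bs{x} \neq \bs{y} $.
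If $ \sum_{i=1}^n x_i \neq \sum_{i=1}^n y_i $, then $ \sum_{i=1}^n (x_i - y_i) = (2t+1) k $ for some nonzero integer $ k $, whence
$ \sum_{i=1}^n |x_i - y_i| \geqslant \left| \sum_{i=1}^n (x_i - y_i) \right| = (2t+1)|k| \geqslant 2t+1 $.
If instead $ \sum_{i=1}^n x_i = \sum_{i=1}^n y_i $, suppose for contradiction that $ \sum_{i=1}^n |x_i - y_i| \leqslant 2t $.
Put $ r_i = \max(y_i - x_i, 0) $ and $ s_i = \max(x_i - y_i, 0) $, so that $ r_i, s_i \geqslant 0 $, $ \bs{x} + \bs{r} = \bs{y} + \bs{s} $, $ \bs{r} \neq \bs{s} $, and
$ \sum_{i=1}^n r_i = \sum_{i=1}^n s_i =: m $
(the two sums are equal because $ \sum_{i=1}^n (x_i - y_i) = 0 $), with $ 2m = \sum_{i=1}^n |x_i - y_i| \leqslant 2t $, i.e.\ $ m \leqslant t $.
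From $ \sum_{i=1}^n x_i b_i = \sum_{i=1}^n y_i b_i = 0 $ we get $ \sum_{i=1}^n r_i b_i = \sum_{i=1}^n s_i b_i $, with $ \sum_{i=1}^n r_i = \sum_{i=1}^n s_i = m \leqslant t $.
Since a Sidon set of order $ t $ is also a Sidon set of order $ m $ for every $ m \leqslant t $, this contradicts \eqref{eq:sidon}.
Hence the minimum $ \ell_1 $ distance of $ {\mathcal L}_{n,t;B} $ is at least $ 2t+1 $, so $ {\mathcal L}_{n,t;B} $ induces a packing of discrete cross-polytopes of radius $ t $ in $ \mathbb{Z}^n $.
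Since $ \det {\mathcal L}_{n,t;B} = (2t+1)\,|G| $ and $ |(tC_n) \cap \mathbb{Z}^n| = \sum_{j \geqslant 0} 2^j \binom{n}{j} \binom{t}{j} $, the density of this packing is
$ \frac{ \sum_{j \geqslant 0} 2^j \binom{n}{j} \binom{t}{j} }{ (2t+1)\,|G| } $.
Minimizing $ |G| $ over all admissible groups gives the left-hand side of \eqref{eq:densityZ}, and taking $ G = \mathbb{Z}_{\pprost(n)^{t} - 1} $ as furnished by the Bose--Chowla construction gives the strict inequality in \eqref{eq:densityZ}.
Finally, since $ t $ is fixed, the field $ GF(\pprost(n)^{t}) $ has size polynomial in $ n $, so the Sidon set, and hence the lattice \eqref{eq:LZ}, can be constructed in time polynomial in $ n $.
\end{proof}
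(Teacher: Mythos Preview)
Your proof is correct and follows exactly the approach sketched in the paper: modify the lattice \eqref{eq:L} by replacing the modulus $2t$ with $2t+1$ and the Sidon order $t-1$ with $t$, then repeat the two-case distance argument from the proof of Theorem~\ref{thm:Sidon}. You have simply written out in full the details that the paper leaves to the reader, including the explicit choice $r_i=\max(y_i-x_i,0)$, $s_i=\max(x_i-y_i,0)$ and the observation that a Sidon set of order $t$ is also one of every smaller order.
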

\begin{proof}
A lattice of minimum $ \ell_1 $ distance $ \geqslant\! 2t+1 $ can be obtained
by using the same construction as in \eqref{eq:L}, with two minor modifications:
we require that $ \sum_{i=1}^n x_i = 0  \pmod{2t + 1} $ (instead of $ \mod {2t} $),
and that $ B $ is a Sidon set of order $ t $ (instead of $ t - 1 $).
The lattice can be efficiently constructed because the radius is fixed and hence
the group containing a Sidon set is of polynomial size, $ {\mathcal O}(n^t) $.
\end{proof}

For a fixed radius $ t $ and $ n \to \infty $, the asymptotic value of
the expression on the right-hand side of \eqref{eq:densityZ} is
\begin{align}
\label{eq:asymp_dens}
  \frac{2^t}{t! \, (2t+1)} .
\end{align}
For $ t = 1, 2 $, this lower bound can be improved.
For $ t = 1 $, the maximum possible density of $ 1 $ can be achieved fore
every $ n $, as perfect packings of discrete cross-polytopes of radius $ 1 $
exist (and are easily constructed) in all dimensions \cite{golomb+welch}.
For $ t = 2 $, the construction from \cite{rush} yields the asymptotic
density $ \frac{1}{t!} = \frac{1}{2} $, while the expression in
\eqref{eq:asymp_dens} equals $ \frac{2}{5} $.
For $ t \geqslant 3 $, the asymptotic density in \eqref{eq:asymp_dens}
is, to the best of our knowledge, the highest known.

\section*{Acknowledgements}

The author would like to thank the referees for their thorough reading and
\emph{constructive} comments which greatly improved the manuscript, and
Benjamin Wesolowski for clarifying several points about \cite{kleinjung}.

%\vfill
\vspace{5mm}


\begin{thebibliography}{99}

%\bibitem{berlekamp}
   %E. R. Berlekamp,
   %\emph{Algebraic Coding Theory},
   %revised ed., World Scientific, 2015.
\bibitem{bose+chowla}
   R. C. Bose, S. Chowla,
   ``Theorems in the Additive Theory of Numbers,''
   \emph{Comment. Math. Helv.}, {\bf 37}(1) (1962), 141--147. %no. 1
\bibitem{conway+sloane}
   J. H. Conway, N. J. A. Sloane,
   \emph{Sphere Packings, Lattices and Groups},
   $ 3^\textnormal{rd} $ ed., Springer, 1999.
\bibitem{primes}
   R. Crandall, C. Pomerance,
   \emph{Prime Numbers, A Computational Perspective},
   $ 2^\textnormal{nd} $ ed., Springer, 2005.
\bibitem{elkies}
   N. D. Elkies, A. M. Odlyzko, J. A. Rush,
   ``On the Packing Densities of Superballs and Other Bodies,''
   \emph{Invent. Math.}, {\bf 105}(1) (1991), 613--639.
\bibitem{gargava+serban}
   N. Gargava, V. Serban,
   ``Dense Packings via Lifts of Codes to Division Rings,''
   \href{https://doi.org/10.48550/arXiv.2111.03684}{https://doi.org/10.48550/arXiv.2111.03684}.
\bibitem{golomb+welch}
   S. W. Golomb, L. R. Welch,
   ``Perfect Codes in the {L}ee Metric and the Packing of Polyominoes,''
   \emph{SIAM J. Appl. Math.}, {\bf 18}(2) (1970), 302--317. %no. 2
\bibitem{granger}
   R. Granger, T. Kleinjung, J. Zumbr\"agel,
   ``On the Discrete Logarithm Problem in Finite Fields of Fixed Characteristic,''
   \emph{Trans. Amer. Math. Soc.}, {\bf 370}(5) (2018), 3129--3145.
\bibitem{gruber+lek}
   P. M. Gruber, C. G. Lekkerkerker,
   \emph{Geometry of Numbers},
   $ 2^\textnormal{nd} $ ed., North-Holland, 1987.
\bibitem{litsyn}
   S. N. Litsyn, M. A. Tsfasman,
   ``Constructive High-Dimensional Sphere Packings,''
   \emph{Duke Math. J.}, {\bf 54}(1) (1987), 147--161. %no. 1
\bibitem{kleinjung}
   T. Kleinjung, B. Wesolowski,
   ``Discrete Logarithms in Quasi-polynomial Time in Finite Fields of Fixed Characteristic,''
   \emph{J. Amer. Math. Soc.}, {\bf 35}(2) (2022), 581--624.
\bibitem{kovacevic+tan_sidma}
   M. Kova\v{c}evi\'c, V. Y. F. Tan,
   ``Improved Bounds on {S}idon Sets via Lattice Packings of Simplices,''
   \emph{SIAM J. Discrete Math.}, {\bf 31}(3) (2017), 2269--2278. %no. 3
\bibitem{kovacevic+tan}
   M. Kova\v{c}evi\'c, V. Y. F. Tan,
   ``Codes in the Space of Multisets---Coding for Permutation Channels with Impairments,''
   \emph{IEEE Trans. Inform. Theory}, {\bf 64}(7) (2018), 5156--5169. %no. 7
\bibitem{obryant}
   K. O'Bryant,
   ``A Complete Annotated Bibliography of Work Related to {S}idon Sequences,''
   \emph{Electron. J. Combin.}, \#DS11 (2004), 39~p. (electronic).
\bibitem{rogers}
   C. A. Rogers,
   \emph{Packing and Covering},
   Cambridge University Press, 1964.
\bibitem{roth}
   R. M. Roth,
   \emph{Introduction to Coding Theory},
   Cambridge University Press, 2006.
\bibitem{roth+siegel}
   R. M. Roth, P. H. Siegel,
   ``Lee-Metric {BCH} Codes and Their Application to Constrained and Partial-Response Channels,''
   \emph{IEEE Trans. Inform. Theory}, {\bf 40}(4) (1994), 1083--1096. %no. 4
\bibitem{rush3}
   J. A. Rush,
   ``A Lower Bound on Packing Density,''
   \emph{Invent. Math.}, {\bf 98} (1989), 499--509.
\bibitem{rush}
   J. A. Rush,
   ``Constructive Packings of Cross Polytopes,''
   \emph{Mathematika}, {\bf 38}(2) (1991), 376--380. %no. 2
\bibitem{rush2}
   J. A. Rush,
   ``A Bound, and a Conjecture, on the Maximum Lattice-Packing Density of a Superball,''
   \emph{Mathematika}, {\bf 40}(1) (1993), 137--143.
%\bibitem{rush+sloane}
   %J. A. Rush, N. J. A. Sloane,
   %``An Improvement to the {M}inkowski--{H}lawka Bound for Packing Superballs,''
   %\emph{Mathematika}, {\bf 34}(1) (1987), 8--18.
\bibitem{shoup}
   V. Shoup,
   ``Searching for Primitive Roots in Finite Fields,''
   \emph{Math. Comp.}, {\bf 58}(197) (1992), 369--380.
	
\end{thebibliography}
\end{document}